\definecolor{aleacolor}{rgb}{0.16,0.59,0.78}
\theoremstyle{plain}
\newtheorem{lemma}{Lemma}
\theoremstyle{definition}
\theoremstyle{remark}
\makeatletter \@addtoreset{equation}{section} \makeatother
\newcounter{propcounter}
\definecolor{mygreen}{RGB}{0,150,0}
\pgfplotsset{compat=1.11}
\renewcommand{\paragraph}[1]{
     \textit{#1.} 
 }
\pgfplotsset{compat=1.11}
\newcommand{\Prob}{{\mathbb{P}}\,}
\definecolor{lightgray}{gray}{0.9}
\definecolor{ltgray}{RGB}{200,200,200}
\definecolor{dkgray}{RGB}{150,150,150}
\newcommand{\todo}[1]{}
\begin{document}

\title[Some algebraic identity and its relations to Stirling numbers]{Some algebraic identity and its relations to Stirling numbers of the second kind}


\author{Paweł Lorek}

\address{Mathematical Institute\\ University of Wrocław}


\email{Pawel.Lorek@math.uni.wroc.pl}


\subjclass[2010]{11B73, 60C05, 11B83} 
\keywords{Stirling numbers, set partitions, combinatorial probability.}

 \begin{abstract}
 In this short note we provide some algebraic identity with a proof exploiting
 its  probabilistic interpretation.
 We show several consequences of the identity, in particular we obtain 
 a new representation of a Stirling number of second kind,
 $$
 S(n,d)={1\over d!} \sum_{1\leq j_1<j_2<\ldots<j_{d-1}< n}
 1\cdot2^{j_{d-1}-j_{d-2}}\cdots d^{j_1}$$
 for integers $n\geq d$. Relating this to other known formula for $S(n,d)$ we also obtain
 $$
 \sum_{1\leq j_1\leq j_2\leq \cdots\leq j_{n-d}\leq d} j_1j_2\ldots,j_{n-d}
=d! \sum_{1\leq j_1<j_2<\ldots<j_{d-1}< n}
 1\cdot2^{j_{d-1}-j_{d-2}}\cdots d^{j_1}.$$
 As a side effect, we have new proof of a known result stating that for any integer $d\in\mathbb{N}$
 and any $x\in\mathbb{R}$ equality 
 $$\sum_{r=0}^d (-1)^r{d\choose r}(x-r)^d=d!$$
 holds. This is a special case of the presented identity.
 \end{abstract}

\maketitle
\vspace{-0.3cm}
\section{Introduction and main result}
Stirling number of the second kind $S(n,d)$ is the number of ways to 
partition a set of $n$ objects into $d$ non-empty (thus $S(n,d)=0$ for $n<d$) subsets. 
An old classical result states that 
\begin{equation}\label{eq:Stir2}
 \sum_{r=0}^d(-1)^{r} {d\choose r} r^{n} =  (-1)^d d! S(n,d),
\end{equation}
see \textsl{e.g.},  \cite{Boyadzhiev} for an analysis point of view on $S(n,d)$, the above relation was already discussed in \cite{Gould}. Equation (\ref{eq:Stir2}) is often called the \textsl{Euler's formula}.
\smallskip\par 
A following formula 
\begin{equation}\label{eq:Ruiz}
 \sum_{r=0}^d (-1)^r{d\choose r}\left(x-r\right)^d = d!  
 \end{equation}
was first known for $x=0$ (see Eq. (1) in \cite{Boyadzhiev}).
 Ruiz \cite{Ruiz}  provided a  proof  by induction of above equality 
 for any $x\in\mathbb{R}$, later   Katsuura
 \cite{Katsuura} gave an elementary proof (also  for $x\in \mathbb{R}$).
 \smallskip\par\noindent
 If we define 
 $$g_{d,n}(x) := \sum_{r=0}^d (-1)^r{d\choose r}\left(x-r\right)^n \quad \textrm{and}\quad  
 \mathfrak{g}_{d,n}(x):=(-1)^{n\boldsymbol{1}(n>d)} g_{d,n}(x),$$
 then left hand sides of equations  (\ref{eq:Stir2}) and (\ref{eq:Ruiz}) can be 
 written as $\mathfrak{g}_{d,n}(0)$ and $\mathfrak{g}_{d,d}(x)$ respectively.
  \smallskip\par
 In this short note we provide an expression for $g_{d,n}(x)$ for any $n\geq d$ and $x\in\mathbb{R}$, thus, in a sense, provide an extension of equations  (\ref{eq:Stir2}) and (\ref{eq:Ruiz}). 
 As a consequence,   $i)$ we provide a new proof of equation (\ref{eq:Ruiz}); $ii)$ we provide a new representation of $S(n,d)$.

\noindent 
\begin{lemma}\label{lem:algeb_ident}
For fixed integers $n\geq d$ and for any $x\in \mathbb{R}$ we have (denoting $j_0\equiv 0$)

\begin{eqnarray}
f_{d,n}(x)&:=& \displaystyle x^{n-d} d! \sum_{1\leq j_1<j_2<\ldots<j_d\leq n} x^{d-j_d} \prod_{r=0}^{d-1} 
\left(x-(d-r) \right)^{j_{r+1}-j_{r}-1} \nonumber \\[12pt]
&=& \displaystyle 
\sum_{r=0}^d (-1)^r{d\choose r}\left(x-r\right)^n:=g_{d,n}(x). \label{eq:alg_ident}
\end{eqnarray}

\end{lemma}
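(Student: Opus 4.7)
The plan is to establish the identity first for every integer $x\geq d$ by a combinatorial double-counting argument, and then extend it to all $x\in\mathbb{R}$ via the polynomial identity theorem. Both sides are polynomials in $x$: the right-hand side $g_{d,n}(x)$ manifestly so, and the left-hand side after rewriting
$$
f_{d,n}(x)=d!\sum_{1\leq j_1<\cdots<j_d\leq n} x^{n-j_d}\prod_{r=0}^{d-1}\bigl(x-(d-r)\bigr)^{j_{r+1}-j_r-1},
$$
so it will suffice to verify the identity on the infinite set of integers $x\geq d$.

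For such $x$, I would interpret $g_{d,n}(x)$ via inclusion-exclusion as the number of functions $f\colon[n]\to[x]$ whose image contains the set $\{1,2,\ldots,d\}$. Indeed, $(x-r)^n$ counts functions whose image avoids any fixed $r$-subset of $\{1,\ldots,d\}$, and the alternating sum $\sum_{r=0}^d(-1)^r\binom{d}{r}(x-r)^n$ enforces, by Möbius inversion, that none of $1,\ldots,d$ is missed.

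The heart of the proof is to count the same family of functions in a second way, by stratifying over ``first-hit times.'' For each such $f$, let $j_k$ be the position at which the $k$-th new (i.e., as-yet-unseen) element of $\{1,\ldots,d\}$ first appears in the sequence $f(1),f(2),\ldots,f(n)$; clearly $1\leq j_1<j_2<\cdots<j_d\leq n$. Conditional on $(j_1,\ldots,j_d)$, the $j_1-1$ positions preceding $j_1$ must take values in $[x]\setminus\{1,\ldots,d\}$, yielding $(x-d)^{j_1-1}$ choices; the $j_{k+1}-j_k-1$ positions strictly between $j_k$ and $j_{k+1}$ must avoid the $d-k$ not-yet-seen special elements, yielding $(x-(d-k))^{j_{k+1}-j_k-1}$ choices; the $n-j_d$ positions after $j_d$ are unconstrained, yielding $x^{n-j_d}$ choices; and the $d-k$ independent choices $f(j_{k+1})\in\{1,\ldots,d\}\setminus\{f(j_1),\ldots,f(j_k)\}$ for $k=0,1,\ldots,d-1$ contribute the combined factor $d\cdot(d-1)\cdots 1=d!$. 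Summing the product over all admissible tuples $(j_1,\ldots,j_d)$ recovers exactly the expression claimed for $f_{d,n}(x)$, which proves $f_{d,n}(x)=g_{d,n}(x)$ for every integer $x\geq d$, and hence identically in $x$.

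The main obstacle is essentially a careful bookkeeping: matching each factor in the claimed formula with the correct ``number of allowed values'' at the corresponding block of positions, and verifying that the single factor $d!$ simultaneously accounts for all $d-k$ choices at the successive first-hit positions $j_{k+1}$ without double counting. Once that alignment is established, the polynomial-identity extension from integers to $\mathbb{R}$ is automatic.
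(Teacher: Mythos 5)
Your proof is correct and is essentially the paper's own argument in combinatorial rather than probabilistic dress: the paper normalizes by $x^n$ and computes, for real $x\geq d$, the probability that $d$ boxes are all hit by $n$ independent balls in two ways (inclusion-exclusion versus first-hit-time decomposition), which is exactly your count of functions $[n]\to[x]$ covering $\{1,\ldots,d\}$ restricted to integer $x\geq d$. Both versions supply infinitely many points of agreement between two polynomials of degree $n-d$, so the concluding step is identical.
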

\begin{proof}
First, we show that (\ref{eq:alg_ident}) holds for $x\geq d$. 
In such a case, we may divide both sides by $x^n$ and noting that 
$x^{d-j_d}=\prod_{r=0}^{d-1} (1/x)^{j_{r+1}-j_r}$  we may  rewrite it as


\begin{eqnarray}
\rho_{d,n}(x)&:=& \displaystyle {d!\over x^d} \sum_{1\leq j_1<j_2<\ldots<j_d\leq n} \prod_{r=0}^{d-1} \left(1-(d-r){1\over x}\right)^{j_{r+1}-j_{r}-1} \nonumber \\[6pt]
&=& \displaystyle {1\over x^n}
\sum_{r=0}^d (-1)^r{d\choose r}\left(x-r\right)^n:=\kappa_{d,n}(x). \label{eq:prob_ident}
\end{eqnarray}
 In such a case, it turns out that both sides have a probabilistic interpretation.
 We throw $n$ balls into  boxes $0,1,2,\ldots,d$, each ball 
 is independently placed with probability ${1\over x}$ in one of the boxes $1,2,\ldots,d$,
   with the remaining probability, \textsl{i.e.}, with probability $1-{d\over x}$, it is placed in 
 box  number 0. \par 
 We will compute the probability that each of the boxes $1,2,\ldots,d$ contains 
 at least one ball, denote this event by $B$, in two different ways.
 \smallskip\par 
 
\noindent $\bullet$ Method 1. Let $A_i$ be the event that box $i$ is empty. We have $\Prob(A_i)=\left(1-{1\over x}\right)^n$. The probability that boxes  ${i_1,\ldots,i_r}, r\leq d$ are empty is
  $$\Prob(A_{i_1}\cap\cdots\cap A_{i_r})=\left(1-{r\over x}\right)^n.$$
  From inclusion-exclusion formula, the probability that at least one box out of $1,\ldots,d$ is empty is
  \begin{eqnarray*}
\Prob\left(\bigcup_{r=1}^d A_i\right) & =& \sum_{\emptyset\neq J\subseteq \{1,\ldots,d\}}
  (-1)^{|J|+1}\Prob\left(\bigcap_{j\in J} A_j\right)   \\  
   & =& \sum_{r=1}^d (-1)^{r+1} {d\choose r} \left(1-{r\over x}\right)^n  \\
  \end{eqnarray*}
  and the probability that none of the boxes $1,\ldots,d$ is empty is
   \begin{eqnarray*}
\Prob\left(B\right) =1-\Prob\left(\bigcup_{r=1}^d A_i\right)
&=& \sum_{r=0}^d (-1)^{r} \left(1-{r\over x}\right)^n \\
& =&{1\over  x^n}\sum_{r=0}^d (-1)^{r} \left(x-r\right)^n=\kappa_{d,n}(x). 
  \end{eqnarray*}
  
  \medskip\par 
 
\noindent  $\bullet$  Method 2.  Assume that first box number $i_1$ becomes non-empty,
  then box number $i_2$ becomes non-empty, etc. until box $i_d$ becomes non-empty.
   Moreover, assume that box $i_1$ becomes non-empty at step $j_1$, box $i_2$ becomes 
   non-empty at step $j_2$, etc. until box $i_d$ becomes non-empty at step $j_d$.
   It means that for first $j_1-1$ steps all the boxes $1,\ldots,d$ were empty, \textsl{i.e.}, the balls were placed in box 0, what happens with probability $\left(1-{d\over x}\right)^{j_1-1}$.
   Then, at step $j_1$, a ball is placed in box $i_1$, what happens with probability $1/x$.
   Then, for next $j_2-j_1-1$ no new box (out of $1,\ldots,d$) becomes non-empty what happens 
   with probability $\left(1-{d-1\over x}\right)^{j_1-1}$, at step $j_2$ box $i_2$ 
   becomes non-empty with probability $1/x$ and so on. In general, box $i_r$ 
   becomes non-empty at step $j_r$ (with probability $1/x$), then no new box becomes non-empty 
   for $j_{r+1}-j_r-1$ steps, what happens with probability $\left(1-{d-(r-1)\over x}\right)^{j_2-j_1}$. 
   
The situation can be depicted as follows (upper rows -- step numbers, lower rows -- probabilities)

$$ \underbrace{1,\ldots,j_1-1}_{(1-d{1\over x})^{j_1-1}} \underbrace{j_1}_{{1\over x}}
 \underbrace{j_1+1,\ldots,j_2-1}_{(1-(d-1){1\over x})^{j_2-j_1-1}} \underbrace{j_2}_{{1\over x}} 
 \ldots 
  \underbrace{j_{r}+1,\ldots,j_{r+1}-1}_{(1-(d-r){1\over x})^{j_{r+1}-j_{r}-1}} \underbrace{j_{r+1}}_{{1\over x}} 
  \ldots $$
  $$\ldots
  \underbrace{j_{d-1}+1,\ldots,j_{d}-1}_{(1-{1\over x})^{j_d-j_{d-1}-1}} \underbrace{j_d}_{{1\over x}}   
  \underbrace{j_{d-1}+1,\ldots}_{1}
  $$
  Thus, the probability of the event is 
  \begin{equation}\label{eq:prod1}
    \prod_{r=0}^{d-1}\left({1\over x}\left(1-(d-r){1\over x}\right)^{j_{r+1}-j_r-1}\right)=
 {1\over  x^{d}}\prod_{r=0}^{d-1}\left(1-(d-r){1\over x}\right)^{j_{r+1}-j_r-1}.
  \end{equation}

  To compute the probability that none of the boxes $1,\ldots,d$ is empty,
  we need to sum (\ref{eq:prod1}) over all possible time steps $1\leq j_1<j_2<\cdots <j_d\leq n$ 
  at which consecutive boxes become non-empty and multiply by $d!$, since there are so many orderings of $i_1,\ldots,i_d$. Finally, we have 
  
    $$\Prob(B)={d!\over x^d}\displaystyle \sum_{1\leq j_1<j_2<\ldots<j_d\leq n} \prod_{r=0}^{d-1} \left(1-(d-r){1\over x}\right)^{j_{r+1}-j_{r}-1} $$
    and (\ref{eq:prob_ident}), and thus (\ref{eq:alg_ident}) for $x\geq d$ is proven.
  \medskip\par 
  Now note that 
 \begin{eqnarray}
f_{d,n}(x)&=&  \displaystyle  d! \sum_{1\leq j_1<j_2<\ldots<j_d\leq n} x^{n-j_d}\prod_{r=0}^{d-1}  \left(x-(d-r)\right)^{j_{r+1}-j_{r}-1}\label{eq:fdj_v2}
%
\end{eqnarray}
what means that $f_{d,n}(x)$ is (since $n-j_d\geq 0$) a polynomial. 
 To be more exact, it is a polynomial of degree $n-d$.
Thus, both $f_{d,n}(x)$ and $g_{d,n}(x)$ are polynomials of degree $n-d$. We have showed that
 $f_{d,n}(x)=g_{d,n}(x)$  for infinitely many points (for all $x\geq d$), what means that 
 they are equal for all $x\in\mathbb{R}$.
\end{proof}


 \section{Some identities arising from the main result.}
 Taking $n=d$ in Lemma \ref{lem:algeb_ident} we have (there is only one term in the some, since 
 we must have $j_1=1,\ldots,j_d=d=n$) that for any $x\in \mathbb{R}$
 \begin{equation*}\label{eq:ruiz}
 d!= \sum_{r=0}^d (-1)^r{d\choose r}\left(x-r\right)^d,
 \end{equation*}
 \textsl{i.e.}, formula (\ref{eq:Ruiz}) is recovered.
%
 Thus, the formula (\ref{eq:alg_ident}) can be seen as an extension of 
 (\ref{eq:Ruiz}), where we replace $(x-r)^d$ with    $(x-r)^n$ 
 for any  $n\geq d$.
 \medskip\par \noindent
 The following formula relating $g_{d,n}(x)$ and Stirling numbers 
 of the second kind
 \begin{equation}\label{eq:gdn2}
  g_{d,n}(x)=\sum_{r=0}^d (-1)^r{d\choose r}\left(x-r\right)^n 
 =  d! 
\sum_{k=d}^n{n\choose k} 
(-1)^{d-k}x^{n-k} S(k,d)
 \end{equation}
 is known --  \textsl{e.g.}, slightly different formulation 
is given in \cite[p. 254]{Boyadzhiev}. To derive it one needs to use 
binomial expansion of  $(x-r)^n$ and  formula  (\ref{eq:Stir2}).
Thus, we have a following representation
$$
\sum_{k=d}^n{n\choose k} 
(-1)^{d-k}x^{n-k} S(k,d)
=
\sum_{1\leq j_1<j_2<\ldots<j_d\leq n} x^{n-j_d}\prod_{r=0}^{d-1} \left(x-(d-r)\right)^{j_{r+1}-j_{r}-1}. $$

\medskip\par \noindent 
Relating (\ref{eq:Stir2}) and (\ref{eq:alg_ident}) 
we have  
\begin{eqnarray*}
  S(n,d) & = & {(-1)^{d}\over d!}\sum_{r=0}^d(-1)^r {d\choose r} r^{n} \\
  & = & {(-1)^{n-d}\over d!} \sum_{r=0}^d(-1)^r {d\choose r} (0-r)^{n} = g_{d,n}(0)=\left.{(-1)^d\over d!} f_{d,n}(x) \right|_{x=0}.  \\
\end{eqnarray*}
We thus need to compute the coefficient $a_0$ of 
a polynomial $f_{d,n}(x)=a_0+a_1 x + a_2 x^2+\ldots+a_{n-d}x^{n-d}$. Recall 
the formulation  (\ref{eq:fdj_v2})
$$f_{d,n}(x) = d! \sum_{1\leq j_1<j_2<\ldots<j_d\leq n} x^{n-j_d}\prod_{r=0}^{d-1}  \left(x-(d-r)\right)^{j_{r+1}-j_{r}-1}.$$
Only cases such that $j_d=n$ will contribute to $a_0$. Let us rewrite the product then 
$\prod_{r=0}^{d-1}  \left((r-d)+x)\right)^{j_{r+1}-j_{r}-1}$, the  intercept is (recall 
that $j_0=0$)
\begin{eqnarray*}
\prod_{r=0}^{d-1}  \left((r-d)+x)\right)^{j_{r+1}-j_{r}-1} &= &(0-d)^{j_1-1}\cdots (-2)^{j_{d-1}-j_{d-2}-1}\cdot(-1)^{n-j_{d-1}-1}\\
  &= & (-1)^{n-d} \cdot2^{j_{d-1}-j_{d-2}}\cdot 3^{j_{d-2}-j_{d-3}}\cdots d^{j_1} (d!)^{-1}
\end{eqnarray*}
and thus (now $j_d=n$)
$$  f_{d,n}(0)=(-1)^{n-d}\sum_{1\leq j_1<j_2<\ldots<j_{d-1}< n}
 1\cdot2^{j_{d-1}-j_{d-2}}\cdot 3^{j_{d-2}-j_{d-3}}\cdots d^{j_1} 
$$
Finally, we have a new representation of $S(n,d),$ namely
\begin{eqnarray}
 S(n,d) &= &{1\over d!} \sum_{1\leq j_1<j_2<\ldots<j_{d-1}< n}
 1\cdot2^{j_{d-1}-j_{d-2}}\cdots d^{j_1}. 
  \label{eq:newStirling2}
\end{eqnarray}
\smallskip\par 
Let us now define $S_2(n,d)$ to be the sum of the products of $n$ integers 
taken $d$ at a time with repetitions:
\begin{equation}\label{eq:S2}
S_2(n,d)=\sum_{1\leq j_1\leq j_2\leq \cdots\leq j_d\leq n} j_1j_2\ldots j_d.
\end{equation}
In \cite[Eq. (14.13) p. 195]{Gould2} the relation between $S_2(n,d)$ and 
Stirling numbers of second kind $S(n,d)$ was provided:
\begin{equation}\label{eq:S2S}
S(n,d)=S_2(d,n-d).
\end{equation}
For example, we have $S(6,4)=S_2(4,2)$, computing it using (\ref{eq:newStirling2}) 
and (\ref{eq:S2}) yields

$$\begin{array}{llllllll}
 S(6,4) & = & \displaystyle{1\over 4!} & \large( & 
 2\cdot3\cdot 4 + 2^2\cdot3\cdot 4  + 2^3\cdot3\cdot 4  + 2\cdot3^2\cdot 4  + 2^2\cdot3^2\cdot 4 \ + & \\[8pt]
        &    &            &    & 
  2\cdot3^3\cdot 4  + 2\cdot3\cdot 4^2  + 2^2\cdot3\cdot 4^2  +2\cdot3^2\cdot 4^2  + 2\cdot3\cdot 4^3
  \left.\right)\\[8pt]
  & = & \displaystyle{1\over 24} & \large( & 
  24+48+96+72+144+216+96+192+288 \quad \large)   = \displaystyle {1560\over 24}=65.
    & \\[17pt]   
    
     S_2(4,2) & = &  &   & 
     1\cdot 1 + 1\cdot 2 + 1\cdot 3 + 1\cdot 4 + 2\cdot 2 + 2\cdot 3 + 2\cdot 4 + 3\cdot 3\ + 
     \\[8pt]
        &    &            &    & 
         3\cdot 4 + 4\cdot 4 = 1+2+3+4+4+6+8+9+12+16 = 65.
\end{array}
$$
\medskip\par 
\noindent
Combining (\ref{eq:newStirling2}), (\ref{eq:S2}) and  (\ref{eq:S2S}) we get 
for $n\geq d$
$$
\sum_{1\leq j_1\leq j_2\leq \cdots\leq j_{n-d}\leq d} j_1j_2\ldots,j_{n-d}
=d! \sum_{1\leq j_1<j_2<\ldots<j_{d-1}< n}
 1\cdot2^{j_{d-1}-j_{d-2}}\cdots d^{j_1},$$
 or more compactly (where $j_d$ can be any number on the rhs),
$$
\sum_{1\leq j_1\leq j_2\leq \cdots\leq j_{n-d}\leq d} \prod_{r=1}^{n-d} j_r
=d! \sum_{1\leq j_1<j_2<\ldots<j_{d-1}< n}
\prod_{r=1}^d r^{j_{d-r+1}-j_{d-r}}.$$
\smallskip\par \noindent
Another expression for $S_2(n,d)$ is provided in \cite[Eq. (14.10) p. 194]{Gould2}, namely 
$$S_2(n,d)=\sum_{j_d=1}^n j_d  \sum_{j_{d-1}=j_d}^n j_{d-1} \sum_{j_{d-2}=j_{d-1}}^n j_{d-2} \cdots j_2 \sum_{j_1=j_2}^n j_1 .$$
Thus, we also have a relation 
\begin{eqnarray*}
 & & \sum_{j_{n-d}=1}^d j_{n-d}  \sum_{j_{n-d-1}=j_{n-d}}^d j_{n-d-1} \sum_{j_{n-d-2}=j_{n-d-1}}^n j_{n-d-2} \cdots j_2 \sum_{j_1=j_2}^d j_1 \\[12pt]
 &=&d! \sum_{1\leq j_1<j_2<\ldots<j_{d-1}< n}
 1\cdot2^{j_{d-1}-j_{d-2}}\cdots d^{j_1}.
\end{eqnarray*}

\smallskip\par 

It is worth noting, that Batir \cite{Batir} showed a following 
relation of  $S(n,d)$ with $s_n(d)$, the latter involves \textsl{similar} sums
to the ones appearing in $S_2(n,d)$ (however, the latter 
sums the products of reciprocals of $j_1,\ldots,j_d$), namely
\begin{eqnarray*}
S(n,d)&=&{-1\over d!} s_n(-d), \qquad  \textrm{where}  \\
s_n(d) &=& \sum_{r=1}^d{d\choose r} {(-1)^{r-1}\over r^n}\stackrel{(*)}{=} \sum_{1\leq j_1\leq j_2\leq\ldots\leq j_n\leq d} {1\over j_1 j_2\cdots j_d},
\end{eqnarray*}
where $(*)$ was proven in \cite{Dilcher}.
Note the difference: on the rhs the roles of $n$ and $d$ are swapped, the summation is over 
indices with weak inequalities.
As a side effect, we also obtain
\begin{equation*}
 \sum_{1\leq j_1<j_2<\ldots<j_{d-1}< n}
 1\cdot2^{j_{d-1}-j_{d-2}}\cdots d^{j_1-1} 
 =(-1)^{n-d-1} s_n(-d).
\end{equation*}

\medskip\par \noindent
Using (\ref{eq:gdn2}) and (\ref{eq:alg_ident}) we may rewrite 

 \begin{equation*} 
  d! \sum_{k=d}^n{n\choose k} 
(-1)^{d-k}x^{n-k} S(k,d) = f_{d,n}(x),
 \end{equation*}
 equivalently, for $x\neq 0$
  \begin{equation*} 
   \sum_{k=d}^n{n\choose k} 
(-1)^{n-k}{S(k,d)\over x^k} = (-1)^{n-d} {f_{d,n}(x)\over d! x^n}=:\beta_{n,d}(x).
 \end{equation*}
 Recall the famous \textsl{inversion formula}:
 $$a_n=\sum_{k=0}^n {n\choose k}(-1)^{n-k}b_k \qquad \textrm{iff} \qquad 
 b_n=\sum_{k=0}^n {n\choose k} a_k.$$
 Taking $b_k={S(k,d)\over x^k}$ and $a_n=\beta_{n,d}(x)$ we obtain  
 $${S(n,d)\over x^n}=\sum_{k=0}^n {n\choose k} (-1)^{n-k}{f_{k,n}(x)\over k! x^n},$$
 \textsl{i.e.}, we obtain another formula for $S(n,d)$ in terms of functions $f_{k,n}(x)$, namely:
 $$S(n,d)=\sum_{k=0}^n {n\choose k} (-1)^{n-k}{f_{k,n}(x)\over k!}.$$
 
\bigskip\par 
 
\bibliographystyle{amsalpha} 
\bibliography{Lorek_algebraic_identity}

\providecommand{\bysame}{\leavevmode\hbox to3em{\hrulefill}\thinspace}
\providecommand{\MR}{\relax\ifhmode\unskip\space\fi MR }
\providecommand{\MRhref}[2]{%
  \href{http://www.ams.org/mathscinet-getitem?mr=#1}{#2}
}
\providecommand{\href}[2]{#2}
\begin{thebibliography}{Gou78}

\bibitem[Bat17]{Batir}
Necdet Batir, \emph{On some combinatorial identities and harmonic sums},
  International Journal of Number Theory \textbf{13} (2017), 1695--1709.

\bibitem[Boy12]{Boyadzhiev}
Khristo~N. Boyadzhiev, \emph{Close encounters with the {S}tirling numbers of
  the second kind}, Mathematics Magazine \textbf{85} (2012), 252--266.

\bibitem[Dil95]{Dilcher}
Karl Dilcher, \emph{Some q-series identities related to divisor functions},
  Discrete Mathematics \textbf{145} (1995), 83--93.

\bibitem[Gou78]{Gould}
H.~W. Gould, \emph{Euler's formula nth differences of powers}, The American
  Mathematical Monthly \textbf{85} (1978), 450--467.

\bibitem[Kat09]{Katsuura}
Hidefumi Katsuura, \emph{Summations involving binomial coefficients}, The
  College Mathematics Journal \textbf{40} (2009), no.~4, 275--278.

\bibitem[QG15]{Gould2}
Jocelyn Quaintance and H.~W. Gould, \emph{Combinatorial identities for
  {S}tirling numbers: The unpublished notes of {H}. {W}. {G}ould}, World
  Scientific, 2015.

\bibitem[Rui96]{Ruiz}
Sebastián~Martín Ruiz, \emph{80.52 {A}n algebraic identity leading to
  {W}ilson’s theorem}, The Mathematical Gazette \textbf{80} (1996), 579--582.

\end{thebibliography}

 \end{document}